\newtheorem{thm}{Theorem}[section] 
\newtheorem{cor}[thm]{Corollary}
\newtheorem{lem}[thm]{Lemma}
\theoremstyle{definition}
\newtheorem{rem}[thm]{Remark}
\newcommand\operA[2]{{\if!#2!\operatorname{#1}\else{\operatorname{#1}_{#2}^{\phantom{I}}}\fi}} 
\def\norm{{\operatorname{N}}}
\newcommand{\Trace}[1][]{\if!#1!\operatorname{Tr}\else{\operatorname{Tr}_{#1}^{\phantom{I}}}\fi} 
\long\def\forget#1\forgotten{{}} %
\def\({\left(}
\def\){\right)}
\newcommand\LAY[3][]{{\begin{array}{c}\mbox{#2} \if#1!{}\else{+}\fi \\ \mbox{#3}\end{array}}}
\def\ps@pprintTitle{%
 \let\@oddhead\@empty
 \let\@evenhead\@empty
 \def\@oddfoot{}%
 \let\@evenfoot\@oddfoot}
\newcommand{\bigperp}{%
  \mathop{\mathpalette\bigp@rp\relax}%
  \displaylimits
}
\newcommand{\bigp@rp}[2]{%
  \vcenter{
    \m@th\hbox{\scalebox{\ifx#1\displaystyle2.1\else1.5\fi}{$#1\perp$}}
  }%
}
\renewcommand{\geq}{\geqslant}
\renewcommand{\leq}{\leqslant}
\newif\iffurther
\journal{??}
\begin{document}
\begin{frontmatter}

\title{Linkage and Essential $p$-Dimension}

\author{Adam Chapman}
\address{School of Computer Science, Academic College of Tel-Aviv-Yaffo, Rabenu Yeruham St., P.O.B 8401 Yaffo, 6818211, Israel}
\ead{adam1chapman@yahoo.com}

\begin{abstract}
We prove that two cyclically linked $p$-algebras of prime degree become inseparably linked under a prime to $p$ extension if and only if the essential $p$-dimension of the pair is 2.
We conclude that the essential $p$-dimension of pairs of cyclically linked $p$-algebras is 3 by constructing an example of a pair that does not become inseparably linked under any prime to $p$ extension.
\end{abstract}

\begin{keyword}
Linkage, Cyclic Algebras, Fields of positive characteristic, Essential Dimension, Kato-Milne Cohomology
\MSC[2020] 16K20 (primary); 13A35, 19D45 (secondary)
\end{keyword}
\end{frontmatter}

\section{Introduction}

Given a prime integer $p$ and a field $F$ of characteristic $p$, a cyclic algebra of degree $p$ over $F$ is an algebra of the form
$[\alpha,\beta)_{p,F}=F\langle i,j : i^p-i=\alpha, j^p=\beta, jij^{-1}=i+1\rangle$
for some $\alpha \in F$ and $\beta \in F^\times$.
We say that two given such algebras are cyclically linked if they share a cyclic field extension of degree $p$ of the center (and thus can be written as symbols sharing the left slot), and inseparably linked if they share a purely inseparable field extension of the center of degree $p$ (and thus can be written as symbols sharing the right slot).

It is known that inseparable linkage implies cyclic linkage and that the opposite is in general not true (see \cite{Chapman:2015}).
The question of what additional condition ensures the converse statement was answered in the special case of $p=2$ (see \cite{ChapmanGilatVishne:2017}), in which case the two algebras $[\alpha,\beta)_{2,F}$ and $[\alpha,\gamma)_{2,F}$ are inseparably linked if and only if the class $\alpha \operatorname{dlog} \beta \wedge \operatorname{dlog}(\gamma)$ in $H_2^3(F)$ is trivial.
In \cite{Chapman:2020b}, it was shown that when $p=3$, if the class of $\alpha \operatorname{dlog} \beta \wedge \operatorname{dlog}(\gamma)$ in $H_3^3(F)$ is trivial, then $[\alpha,\beta)_{3,K}$ and $[\alpha,\gamma)_{3,K}$ are inseparably linked for some extension $K/F$ of $[K:F] \leq 2$. 
In this paper, we prove that for any prime $p$, if the class of $\alpha \operatorname{dlog} \beta \wedge \operatorname{dlog}(\gamma)$ in $H_p^3(F)$ is trivial, then $[\alpha,\beta)_{p,K}$ and $[\alpha,\gamma)_{p,K}$ are inseparably linked for some extension $K/F$ of $p \nmid [K:F]$, and use this to show that two cyclically linked algebras over a field containing an algebraically closed field of characteristic $p$ is of essential $p$-dimension 2 if and only if the two become inseparably linked under restriction to some prime to $p$ extension. We conclude that the essential dimension of pairs of cyclically linked algebras of degree $p$ over field extensions of an algebraically closed field is 3.

\section{Preliminaries}

\subsection{Essential Dimension}
Setting a field $k$ (often assumed to be algebraically closed), and given a covariant functor $\mathcal{F}$ from the category of field extensions $F$ of $k$ to the category of sets, the essential dimension of $A \in \mathcal{F}(F)$, denoted $\operatorname{ED}_k(A)$, is the minimal transcendence degree of a field $E/k$ with $E \subseteq F$ to which $A$ descends, i.e., for which there exists $B \in \mathcal{F}(E)$ such that $A=B \otimes F$ (see \cite{BerhuyFavi:2003}).
The essential $p$-dimension of $A$, denoted $\operatorname{ED}_k(A;p)$, is the minimal essential dimension of $A \otimes L$ where $L$ ranges over the prime to $p$ extensions of $F$. The essential dimension (respectively, essential $p$-dimension) of $\mathcal{F}$, denoted $\operatorname{ED}_k(\mathcal{F})$ (respectively, $\operatorname{ED}_k(\mathcal{F};p)$), is the supremum on the essential dimension (respectively, essential $p$-dimension) of $A$ where $A$ ranges over all elements in $\mathcal{F}(F)$ and $F$ ranges over all field extensions of $k$.
Our interest in this paper is the functor $\operatorname{CLA}$ mapping $F$ to the set of isomorphism classes of pairs $(A,B)$ of cyclically linked algebras of degree $p$ over $F$. The isomorphisms are of central simple $F$-algebras, which means they act as the identity map on the center $F$.

\subsection{Kato-Milne Cohomology}
Given a field $F$ of $\operatorname{char}(F)=p$,
the space of $n$-fold differential forms $c d \beta_1 \wedge \dots \wedge d \beta_n$ is denoted by $\Omega^n F$.
Note that when $\beta \neq 0$, $d\beta/\beta=d\log \beta$.
Consider the map $\wp : \Omega^n F \rightarrow \Omega^n F/d \Omega^{n-1} F$ taking $\alpha d\log \beta_1 \wedge \dots \wedge d\log \beta_n$ to $(\alpha^p-\alpha) d\log \beta_1 \wedge \dots \wedge d\log \beta_n$.
The cokernel of this map is denoted $H_p^{n+1}(F)$ (see \cite{Kato:1982} and \cite[Chapter 9]{GilleSzamuely:2006}).
This group is connected to many important groups, from the filtration groups of the Witt group of fields of characteristic 2 when $p=2$, to the $p$-torsion of the Brauer group: ${_pBr}(F) \cong H_p^2(F)$ given by $[\alpha,\beta)_{p,F} \mapsto \alpha d\log \beta$. This paper is part of the attempt to see how the classes in the higher cohomology groups tell us something about the behaviour of central simple algebras.

\section{Inseparable Linkage and $H_p^3(F)$}

We start by citing two useful theorems:
\begin{thm}[{\cite[Th\'eor\`eme 6]{Gille:2000}}]\label{Gille}
The class of $\alpha d\log(\beta)\wedge d\log(\gamma)$ is trivial in $H_p^3(F)$ if and only if $\gamma$ is the norm of an element in the algebra $[\alpha,\beta)_{p,F}$.
\end{thm}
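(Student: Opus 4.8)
The plan is to read the word ``norm'' as the reduced norm $\operatorname{Nrd}\colon A^{\times}\to F^{\times}$ of the degree-$p$ algebra $A=[\alpha,\beta)_{p,F}$, and to prove the equivalence by showing that the kernel of the cup-product map
\[
\psi\colon F^{\times}\lra H_p^3(F),\qquad \gamma\longmapsto \alpha\,d\log\beta\wedge d\log\gamma ,
\]
is exactly $\operatorname{Nrd}(A^{\times})$. If $A$ is split --- which happens precisely when $\alpha\,d\log\beta=0$ in $H_p^2(F)$, e.g.\ when $\alpha\in\wp(F)$ or $\beta\in F^{\times p}$ --- then $\psi\equiv 0$ and $\operatorname{Nrd}=\det$ is surjective, so both conditions hold for every $\gamma$; I therefore assume $A$ is a division algebra.

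For the implication ``$\gamma\in\operatorname{Nrd}(A^{\times})\Rightarrow\psi(\gamma)=0$'' I would invoke the projection formula. Write $\gamma=\operatorname{Nrd}(x)$. If $x$ is central then $\gamma\in F^{\times p}$ and $d\log\gamma=0$; otherwise $L=F[x]$ has degree $p$ over $F$, so it is a maximal subfield of $A$ and hence a splitting field, giving $\operatorname{res}_{L/F}(\alpha\,d\log\beta)=[A\otimes_F L]=0$ in $H_p^2(L)$. Since $\operatorname{Nrd}(x)=\operatorname{N}_{L/F}(x)$ and $d\log$ carries the field norm $\operatorname{N}_{L/F}$ to the transfer $\operatorname{cor}_{L/F}$, the projection formula yields
\[
\psi(\gamma)=\alpha\,d\log\beta\wedge d\log\operatorname{N}_{L/F}(x)=\operatorname{cor}_{L/F}\!\big(\operatorname{res}_{L/F}(\alpha\,d\log\beta)\wedge d\log x\big)=0 .
\]

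The reverse inclusion is the substantial part, i.e.\ exactness of $A^{\times}\xrightarrow{\ \operatorname{Nrd}\ }F^{\times}\xrightarrow{\ \psi\ }H_p^3(F)$ at $F^{\times}$. The guiding idea is to pass to the Artin--Schreier extension $K=F[i]$, $i^p-i=\alpha$, with $\Gal(K/F)=\langle\sigma\rangle$, $\sigma(i)=i+1$: it splits $A$, so over $K$ the reduced norm is the surjective determinant of $M_p(K)$ and every unit of $K$ is a reduced norm there. One then descends along $K/F$, using the projection formula and the fact that the transfer $\operatorname{cor}_{K/F}$ on logarithmic cohomology realizes $\operatorname{N}_{K/F}$, which on $K\subset A$ agrees with $\operatorname{Nrd}$. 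Combined with the Hilbert~90--type exact sequence for the cyclic extension $K/F$ (relating $\ker\!\big(\operatorname{res}_{K/F}\colon H_p^3(F)\to H_p^3(K)\big)$ to cup product by the Artin--Schreier class $\chi=[\alpha]\in H_p^1(F)$), the hypothesis $\psi(\gamma)=0$ ought to be convertible into the statement that $\gamma$ lies in $\operatorname{Nrd}(A^{\times})$.

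I expect the exactness of this norm sequence at $F^{\times}$ to be the main obstacle: it is the positive-characteristic incarnation of the Merkurjev--Suslin principle that reduced norms of a symbol algebra are detected by the degree-three symbol, and a clean proof seems to require the Bloch--Kato--Gabber identification of $H_p^{n+1}(F)$ with the logarithmic de~Rham--Witt group $\nu(n)$ (equivalently $K_n^{M}(F)/p$) together with the computation of the relevant $\mathcal{K}$-cohomology of the Severi--Brauer variety $\operatorname{SB}(A)$, over whose function field $A$ splits and $\gamma$ is automatically a reduced norm. A more explicit alternative for prime degree would be to make the transfer relation concrete, writing the obstructing class as a sum of symbols over $K$ and tracking it back through $\operatorname{cor}_{K/F}$ to exhibit a preimage of $\gamma$ under $\operatorname{Nrd}$ directly; this trades the general norm principle for a delicate symbol manipulation in $K_2^{M}(K)/p$, which I anticipate being the genuinely hard computation.
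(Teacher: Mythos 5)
This statement is not proved in the paper at all: it is quoted verbatim as an external result (Gille's Th\'eor\`eme 6), so there is no internal proof to compare against; the only correct ``proof'' from the paper's standpoint is the citation itself. Judged as a standalone argument, your proposal has a genuine gap: only the forward implication is actually carried out. Your argument that $\gamma\in\operatorname{Nrd}(A^\times)$ forces the vanishing of $\alpha\,d\log\beta\wedge d\log\gamma$ is essentially sound (modulo citing Kato's construction of the corestriction and the projection formula for $H_p^{n+1}$, and the observation that for prime degree any noncentral $x$ generates a maximal subfield, so $\operatorname{Nrd}(x)=\operatorname{N}_{F[x]/F}(x)$). That is the easy half of the theorem.

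The converse --- that triviality of the class forces $\gamma$ to be a reduced norm --- is the entire content of Gille's theorem, and your proposal does not prove it; it only names machinery that might. Worse, the descent you sketch leans on the wrong exact sequence: the Arason/Hilbert-90-type description of $\ker\bigl(\operatorname{res}_{K/F}\colon H_p^3(F)\to H_p^3(K)\bigr)$ as the image of cup product with the Artin--Schreier class $\chi$ gives nothing here, because $\psi(\gamma)=\chi\cup(\beta)\cup(\gamma)$ lies in that image by construction, and your hypothesis is that $\psi(\gamma)$ vanishes over $F$ itself, not merely over $K$. What is actually needed is exactness at the middle term of
\[
K_2^M(K)/p \xrightarrow{\ N_{K/F}\ } K_2^M(F)/p \xrightarrow{\ \chi\cup\,-\ } H_p^3(F),
\]
together with the identification $\operatorname{Nrd}(A^\times)=\bigl\{\gamma : \{\beta,\gamma\}\in N_{K/F}K_2^M(K)\bigr\}$ for the cyclic algebra $A$; in characteristic $p$ the first input rests on Izhboldin's Hilbert 90 for Milnor $K$-theory and the Bloch--Kato--Gabber isomorphism $K_n^M(F)/p\cong\nu(n)$, i.e.\ precisely the heavy artillery you defer to but never deploy, and your ``more explicit alternative'' of symbol manipulation in $K_2^M(K)/p$ is likewise only a declaration of intent. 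Since the paper uses this theorem as a black box, citing Gille is the right move; what you have written is a correct proof of one direction plus a research plan for the other, not a proof of the statement.
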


\begin{thm}[{\cite[Chapter 4, Theorem 1.13]{Pfister:1995}}]\label{Pfister}
Given system of $n$ homogeneous polynomial forms of degrees prime to $p$ in $n-1$ variables over a field $F$, there exists a nontrivial solution to the system in some prime to $p$ extension of $F$.
\end{thm}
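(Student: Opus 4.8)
The plan is to translate the statement into geometry and then extract the prime-to-$p$ extension from a degree count on the common zero locus. Write $f_1,\dots,f_n$ for the given forms, of degrees $d_1,\dots,d_n$ each coprime to $p$, and let $X$ be the projective scheme they cut out. A nontrivial common zero over a prime-to-$p$ extension $K/F$ is exactly a closed point $P\in X$ whose residue field $\kappa(P)$ has $[\kappa(P):F]$ coprime to $p$: such a point supplies a $\kappa(P)$-rational solution and $\kappa(P)$ is the sought extension. So everything reduces to producing a closed point of $X$ of degree prime to $p$. The mechanism is already visible for a single binary form $f$ of degree $d$ coprime to $p$: factoring $f$ into irreducible binary forms over $F$ gives factors whose degrees sum to $d$, so \emph{not} all of them can be divisible by $p$, and an irreducible factor of degree prime to $p$ corresponds to a closed point of $\mathbb{P}^1$ of degree prime to $p$, hence a zero over a prime-to-$p$ extension.

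For the full system I would run the same divisibility argument on the whole intersection. When the $f_i$ meet in the expected codimension, $X$ is a zero-dimensional complete intersection and B\'{e}zout's theorem computes its degree as the product of the degrees of the forms. Writing $\ell_P$ for the length of the local ring $\mathcal{O}_{X,P}$ at a closed point $P$, one has
\[
\deg X \;=\; \sum_{P}\ell_P\,[\kappa(P):F] \;=\; \prod_{i} d_i ,
\]
and since each $d_i$ is coprime to $p$ the product is coprime to $p$. If every residue degree $[\kappa(P):F]$ were divisible by $p$ the right-hand sum would be divisible by $p$, a contradiction; hence some closed point $P$ has $[\kappa(P):F]$ coprime to $p$, giving the desired $K$. (Equivalently, one may phrase this through the $u$-resultant of $f_1,\dots,f_n$ together with a generic linear form $\sum u_i x_i$: it is a form of degree $\prod_i d_i$ in the $u_i$ that factors over $F$ into pieces of degrees $[\kappa(P):F]$, and the single-form argument of the first paragraph applies to it.)

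The step I expect to be the main obstacle is justifying the degree computation when the forms do \emph{not} meet in the expected dimension, so that $X$ is empty or positive-dimensional and the clean equality $\deg X=\prod_i d_i$ is unavailable. This case is genuinely delicate: naively slicing a positive-dimensional $X$ by generic $F$-rational hyperplanes produces a zero-cycle whose degree need not be coprime to $p$ (for instance a component of $X$ can be an anisotropic conic, all of whose closed points have even degree, even though $p\nmid\prod_i d_i$), so the good point may sit on a different component than a generic slice sees. I would handle this by a deformation/specialization argument: embed the given forms in a flat family degenerating them to a diagonal (Fermat-type) system that is transparently a transverse complete intersection of degree $\prod_i d_i$, and use constancy of degree in the flat family together with the specialization of closed points to transport a prime-to-$p$ point back to the original fibre; alternatively one can induct on the number of forms, using one form at a time to eliminate a variable over a prime-to-$p$ extension (as in the binary base case) and thereby never leave the expected-dimension situation. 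Once a proper zero-dimensional model of degree $\prod_i d_i$ is secured, the residue-degree divisibility argument of the second paragraph completes the proof.
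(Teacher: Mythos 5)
The paper never proves this statement: it is quoted from Pfister's book and used as a black box, so there is no internal proof to compare against and your proposal has to stand on its own. One preliminary remark: the statement as printed has the counts swapped --- read literally ($n$ forms in $n-1$ variables) it is false, since two nonzero forms $c_1x^{d_1}$, $c_2x^{d_2}$ in one variable have no nontrivial common zero. The intended reading, which is what the paper actually uses ($p-1$ equations in $p$ variables) and what your B\'{e}zout count silently assumes, is $n-1$ forms in $n$ variables, i.e.\ $r$ hypersurfaces in $\mathbb{P}^r_F$. With that reading, your first two paragraphs are correct: a solution over a prime-to-$p$ extension is the same as a closed point of $X=V(f_1,\dots,f_r)\subset\mathbb{P}^r_F$ of degree prime to $p$, and when the intersection is zero-dimensional the identity $\sum_P \ell_P\,[\kappa(P):F]=\prod_i d_i$ forces such a point to exist.

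You also correctly isolate the crux (excess dimension), but the mechanism you propose for it does not exist as stated. If $\dim X\geq 1$ there is \emph{no} flat family over $\mathbb{A}^1_F$ with special fibre $X$ and finite generic fibre: flatness over a smooth curve forces constant fibre dimension. Concretely, in the pencil $\mathcal{X}=V\bigl((1-t)f_i+t\,x_i^{d_i}\bigr)\subset\mathbb{P}^r\times\mathbb{A}^1$, every positive-dimensional component of $X$ produces a vertical component of $\mathcal{X}$ lying inside the fibre $t=0$, so ``constancy of degree in the flat family'' is unavailable, and this is precisely the step carrying the whole weight of the hard case. (Your fallback --- inducting by eliminating one variable per form --- is too vague to assess and does not obviously evade the anisotropic-conic phenomenon you yourself raise.)

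The good news is that your own ingredients, ordered differently, repair the argument; what must be flat is not the family but the closure of a point. The Fermat fibre $t=1$ of $\mathcal{X}$ is the single point $(1:0:\dots:0)$, hence zero-dimensional, so by upper semicontinuity of fibre dimension the generic fibre $\mathcal{X}_\eta\subset\mathbb{P}^r_{F(t)}$ is zero-dimensional as well; your B\'{e}zout argument applied over $F(t)$ then produces a closed point $P\in\mathcal{X}_\eta$ with $[\kappa(P):F(t)]$ prime to $p$. Now let $Z=\overline{\{P\}}\subset\mathcal{X}$. This $Z$ is integral, one-dimensional, proper over $\mathbb{A}^1$ and dominates it, hence is flat over the discrete valuation ring $F[t]_{(t)}$ (torsion-free modules over a DVR are flat). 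Its special fibre $Z_0$ is therefore a nonempty finite $F$-scheme of degree $[\kappa(P):F(t)]$, prime to $p$, and $Z_0\subset\mathcal{X}_0=X$ because the equations at $t=0$ are exactly the $f_i$. Running your divisibility argument on $\deg Z_0=\sum_Q \ell_Q\,[\kappa(Q):F]$ yields a closed point of $X$ of degree prime to $p$, which is the desired conclusion. With this substitution your outline becomes a complete proof.
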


We are now ready to see how the trivial class in $H_p^3(F)$ affects inseparable linkage.

\begin{thm}
Given a field $F$ of $\operatorname{char}(F)=p$, if $\alpha d\log \beta \wedge d\log \gamma$ is trivial in $H_p^3(F)$, then $[\alpha,\beta)_{p,K}$ and $[\alpha,\gamma)_{p,K}$ are inseparably linked for some $K/F$ of $p\nmid [K:F]$.
\end{thm}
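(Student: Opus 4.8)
The plan is to combine the two cited results: use \Tref{Gille} to convert the cohomological hypothesis into a norm condition, and then use \Tref{Pfister} to realize that norm by a \emph{purely inseparable} element after a prime to $p$ extension.

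First I would apply \Tref{Gille}: the triviality of $\alpha\,d\log\beta\wedge d\log\gamma$ in $H_p^3(F)$ says precisely that $\gamma=N(w_0)$ for some $w_0$ in $A:=[\alpha,\beta)_{p,F}$, where $N$ denotes the reduced norm. The crucial reduction is the following. If over some extension $K$ there is a purely inseparable $w\in A\otimes_F K$ with $w^p\equiv\gamma\pmod{(K^\times)^p}$, then $K[w]\cong K[\gamma^{1/p}]$ embeds into $A\otimes_F K$, so $A\otimes_F K\cong[\mu,\gamma)_{p,K}$ for some $\mu$; since $[\alpha,\gamma)_{p,K}$ manifestly carries $\gamma$ in its right slot, the two algebras then share the purely inseparable extension $K[\gamma^{1/p}]$ and are inseparably linked. (The degenerate case $\gamma\in(K^\times)^p$, where $[\alpha,\gamma)_{p,K}$ splits, is immediate.) Thus it suffices to upgrade the norm element $w_0$ to a purely inseparable one of the same norm class over a prime to $p$ extension.

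To carry this out I would encode ``purely inseparable with norm $\gamma$'' as a system of forms. An element $w\in A$ is purely inseparable exactly when its reduced characteristic polynomial is $T^p-w^p$, i.e.\ when the subleading coefficients $s_1(w),\dots,s_{p-1}(w)$ (the reduced trace and the higher symmetric functions of the eigenvalues) all vanish; these are homogeneous in the coordinates of $w$ of degrees $1,2,\dots,p-1$, \emph{all prime to $p$}, and on their common zero locus one has $w^p=\pm N(w)$. The target slot is then controlled by the extra requirement $N(w)=\gamma$. I would set up the search for $w$ as a hunt for a nontrivial common zero of this system over a prime to $p$ extension and invoke \Tref{Pfister}, using the anchor $\gamma=N(w_0)$ and the freedom of working modulo $p$-th powers to feed the appropriate subsystem into Pfister's ``$n$ forms in $n-1$ variables'' hypothesis.

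The hard part will be the norm condition itself: $N(w)=\gamma$ is a form of degree $p$, which is \emph{not} prime to $p$, so \Tref{Pfister} cannot be applied to it directly. The crux of the argument is therefore to reorganize the system so that every form handed to Pfister has degree prime to $p$, while still pinning $N(w)$ into the class of $\gamma$ --- concretely, to run Pfister on the coprime-degree coefficient forms $s_1,\dots,s_{p-1}$ and to control the resulting slot by means of $w_0$ (for instance by searching among elements of the form $w_0+v$, or by tracking the reduced norm along the purely inseparable cone), matching the variable and form counts to Pfister's hypothesis. I would also need to verify nondegeneracy, namely that the element produced is genuinely purely inseparable ($N(w)\neq 0$ and $w\notin K$) rather than nilpotent, and finally to descend from the prime to $p$ closure to a finite prime to $p$ subextension $K/F$ carrying the element, which yields the inseparable linkage of $[\alpha,\beta)_{p,K}$ and $[\alpha,\gamma)_{p,K}$.
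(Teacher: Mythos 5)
You have the right skeleton---Gille's theorem to produce $w_0$ with $\operatorname{N}(w_0)=\gamma$, Pfister's theorem applied to the coefficient forms $s_1,\dots,s_{p-1}$ of degrees $1,\dots,p-1$, and the observation that a purely inseparable element yields an inseparable presentation---but the step you yourself flag as ``the crux'' is precisely the step you have not supplied, and it is a genuine gap. Your two concrete suggestions do not close it: searching among elements $w_0+v$ gives no control over $\operatorname{N}(w_0+v)$, since the reduced norm is not additive; and your reduction insists on $w^p\equiv\gamma\pmod{(K^\times)^p}$, a target that is both stronger than necessary and not one that a Pfister-type argument on the forms $s_1,\dots,s_{p-1}$ alone can be made to hit.

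The paper's resolution is a multiplicative ansatz that your outline is missing: restrict the search to elements of the form $z=fw_0$, where $f=x_0+x_1i+\dots+x_{p-1}i^{p-1}$ ranges over the Artin--Schreier subfield $K[i]$ (with $i^p-i=\alpha$) inside $[\alpha,\beta)_{p,K}$. This does two things at once. First, it matches Pfister's count with no degree-$p$ equation ever imposed: the system $s_1(fw_0)=\dots=s_{p-1}(fw_0)=0$ consists of $p-1$ forms of degrees $1,\dots,p-1$, all prime to $p$, in the $p$ variables $x_0,\dots,x_{p-1}$, so it has a nontrivial solution over some $K/F$ with $p\nmid[K:F]$. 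Second, it makes the norm condition take care of itself: by multiplicativity of the reduced norm, any such solution satisfies $z^p=\operatorname{N}(z)=\operatorname{N}_{K[i]/K}(f)\,\gamma$. The right slot produced is therefore not $\gamma$ but $\operatorname{N}_{K[i]/K}(f)\gamma$---and this is exactly where your ``modulo $p$-th powers'' flexibility is too small. The correct flexibility is modulo norms from $K(\wp^{-1}(\alpha))$: since multiplying the right slot of a symbol by a norm from its left-slot Artin--Schreier extension does not change the algebra, one has $[\alpha,\gamma)_{p,K}=[\alpha,\operatorname{N}_{K[i]/K}(f)\gamma)_{p,K}$ (this subsumes $p$-th powers, as $\operatorname{N}_{K[i]/K}(c)=c^p$ for $c\in K$). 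Hence both algebras admit presentations with right slot $\operatorname{N}_{K[i]/K}(f)\gamma$, both contain $K[(\operatorname{N}_{K[i]/K}(f)\gamma)^{1/p}]$, and they are inseparably linked. Without this parametrization over the left-slot subfield, and without relaxing the target from ``$\gamma$ modulo $p$-th powers'' to ``$\gamma$ modulo norms of $K(\wp^{-1}(\alpha))$,'' your outline cannot be completed as stated.
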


\begin{proof}
By Theorem \ref{Gille}, $\gamma$ is the norm of some element $t$ in $[\alpha,\beta)_{p,F}$.
Set $s_1,\dots,s_{p-1},s_p$ to be the characteristic functions of $A=[\alpha,\beta)_{p,F}$, i.e., the homogeneous polynomial functions of degrees $1,\dots,p-1,p$ from $A$ to $F$ such that $y^p+s_1(y)y^{p-1}+\dots+s_{p-1}(y) y+s_p(y)=0$ for any $y\in A$. In particular, $-s_1$ is the trace form and $-s_p$ is the norm.
Recall that the first generator $i$ of $A$ satisfies $i^p-i=\alpha$. Write $f=x_0+x_1 i+\dots+x_{p-1} i^{p-1}$.
By Theorem \ref{Pfister}, the system $s_1(ft)=s_2(ft)=\dots=s_{p-1}(ft)=0$ of $p-1$ equations of degrees $1,2,\dots,p-1$ in $p$ variables $x_0,x_1,\dots,x_{p-1}$ has a nontrivial solution in some $K/F$ of $p\nmid [K:F]$.
The element $z=ft$ in $[\alpha,\beta)_{p,K}$ thus satisfies $z^p=\norm_{K[i]/K}(f)\gamma$, and so $[\alpha,\beta)_{p,K}=[\delta,\norm_{K[i]/K}(f)\gamma)_{p,K}$ for some $\delta \in K$.
However, $[\alpha,\gamma)_{p,K}=[\alpha,\norm_{F[i]/F}(f)\gamma)_{p,K}$ as well, and therefore, the algebras are inseparably linked.
\end{proof}

\begin{cor}
If $[\alpha,\beta)_{p,F}$ and $[\alpha,\gamma)_{p,F}$ share all cyclic maximal subfields, then $[\alpha,\beta)_{p,K}$ and $[\alpha,\gamma)_{p,K}$ are inseparably linked for some $K/F$ of $p\nmid [K:F]$.
\end{cor}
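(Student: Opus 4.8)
The plan is to deduce the corollary from the preceding theorem by showing that the hypothesis forces the class $\alpha\,d\log\beta\wedge d\log\gamma$ to be trivial in $H_p^3(F)$; once this is established, the preceding theorem supplies the desired extension $K/F$ with $p\nmid[K:F]$. By \Tref{Gille} the triviality of this class is equivalent to $\gamma$ being a reduced norm in $A=[\alpha,\beta)_{p,F}$, so the entire task reduces to deducing $\gamma\in\operatorname{Nrd}(A)$ from the fact that $A$ and $B=[\alpha,\gamma)_{p,F}$ share all cyclic maximal subfields.

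The bridge between the two is the elementary remark that each cyclic maximal subfield $L$ contributes $N_{L/F}(L^\times)$ to the reduced norm group of whichever algebra contains it. Thus if $L$ is a cyclic maximal subfield of $B$, then by hypothesis $L$ also embeds into $A$, so that $N_{L/F}(L^\times)\subseteq\operatorname{Nrd}(A)$. Since $\operatorname{Nrd}(A)$ is a group, it therefore contains the subgroup of $F^\times$ generated by the $N_{L/F}(L^\times)$ as $L$ runs over all cyclic maximal subfields of $B$, and it suffices to show that $\gamma$ lies in this subgroup, i.e. that $\gamma$ is a product of norms from cyclic maximal subfields of $B$. I would first record the harmless containment $(F^\times)^p\subseteq\operatorname{Nrd}(A)$ (norms of scalars) and observe that $\gamma=\operatorname{Nrd}_B(j)$ for the generator $j$ with $j^p=\gamma$; the only obstruction is that this witness $j$ is purely inseparable and hence lies in no cyclic subfield of $B$.

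The heart of the matter is therefore to realize $\gamma$ as a reduced norm of an Artin--Schreier element of $B$, i.e. to find $x\in B$ with $x^p-x\in F$ and $\operatorname{Nrd}_B(x)=\gamma$, so that $L=F[x]$ is a cyclic maximal subfield of $B$ with $\gamma\in N_{L/F}(L^\times)$. For $p=2$ this is immediate, since separable quadratic subfields are automatically cyclic and the anisotropic norm form $a^2+ab+\alpha b^2+\gamma(c^2+cd+\alpha d^2)$ of $B$ represents $\gamma$ with a nonzero coefficient on the separable generator: over the infinite field $F$ the smooth affine quadric $\{\operatorname{Nrd}_B=\gamma\}$ is not contained in the hyperplane cutting out that coefficient. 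For general $p$ one must instead produce a genuine Artin--Schreier element of reduced norm $\gamma$: imposing $x^p-x\in F$ means forcing the intermediate characteristic coefficients $s_1(x)=\dots=s_{p-2}(x)=0$ and normalizing $s_{p-1}(x)$, exactly the type of system of homogeneous conditions on the characteristic coefficients that already appears in the proof of the preceding theorem, now to be solved together with the single norm condition $s_p(x)=\gamma$.

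The step I expect to be the main obstacle is precisely this construction over $F$ itself. The naive candidates fail, since every element of the coset $F[i]\,j$ is purely inseparable, so one must genuinely mix the two slots; and one cannot simply pass to a prime to $p$ extension at this stage, because the assumption that $A$ and $B$ share all cyclic maximal subfields is a condition over $F$ that need not survive base change, which is exactly why Pfister's theorem does not apply directly here as it did in the preceding proof. One clean way to organize the argument would be to prove the purely algebra-theoretic generation statement that $B^\times$ is generated by the multiplicative groups of its cyclic maximal subfields, for then $\gamma\in\operatorname{Nrd}_B(B^\times)$ is automatically a product of cyclic norms and the two preceding paragraphs close the proof; an alternative is the geometric route sketched above, establishing that the reduced norm hypersurface $\{\operatorname{Nrd}_B=\gamma\}$ meets the Artin--Schreier locus in an $F$-rational point. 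After either route, \Tref{Gille} and the preceding theorem finish the argument.
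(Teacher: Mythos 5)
Your opening reduction is sound and parallels the paper's first move: deduce the corollary by showing that $\alpha\,d\log\beta\wedge d\log\gamma$ is trivial in $H_p^3(F)$ and then apply the preceding theorem. But the paper obtains that triviality in a single stroke, by citing \cite[Corollary 3.3]{ChapmanDolphin:2019}, which states precisely that two such algebras sharing all cyclic maximal subfields have trivial associated class. You instead try to re-derive this fact via \Tref{Gille}, by showing $\gamma\in\operatorname{Nrd}(A)$, and this is where your argument has a genuine gap: the crucial step --- producing an Artin--Schreier element of $B=[\alpha,\gamma)_{p,F}$ of reduced norm $\gamma$, or more generally writing $\gamma$ as a product of norms from cyclic maximal subfields of $B$ --- is never proved. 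You acknowledge this yourself (``the main obstacle''), and neither of the two routes you propose is carried out: the generation statement that $B^\times$ is generated by the unit groups of its cyclic maximal subfields is not a known fact and no argument for it is offered, while the geometric density argument is only sketched for $p=2$ (where it does work: the reduced norm quadric has the rational point $j$, hence is rational with Zariski-dense $F$-points over the infinite field $F$) and does not extend to $p>2$, where the locus $s_1(x)=\dots=s_{p-2}(x)=0$, $s_p(x)=\gamma$ is no longer a quadric. As you correctly observe, one cannot fall back on Pfister's theorem here, since the hypothesis of sharing all cyclic maximal subfields need not survive base change; but this observation removes the only tool your proposal actually has, leaving the key lemma unsupported.

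So what you have is a correct reduction followed by an unproved lemma, and that lemma is essentially the entire content of the corollary: it is what the cited result \cite[Corollary 3.3]{ChapmanDolphin:2019} provides. Without that citation, or an independent proof of the statement that sharing all cyclic maximal subfields forces $\gamma$ into the norm group of $[\alpha,\beta)_{p,F}$, the argument is incomplete for every prime $p>2$.
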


\begin{proof}
By \cite[Corollary 3.3]{ChapmanDolphin:2019}, the class of $\alpha d\log\beta \wedge d\log \gamma$ in $H_p^3(F)$ is trivial, because the algebras share all cyclic maximal subfields. Then apply the previous theorem.
\end{proof}

\begin{cor}
If $F$ is a $p$-special field (i.e., has no prime to $p$ field extensions) of $\operatorname{char}(F)=p$ with $H_p^3(F)=0$, then cyclic linkage coincides with inseparable linkage.
\end{cor}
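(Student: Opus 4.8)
The plan is to prove the two implications separately, exploiting the fact that one direction is already available. Since inseparable linkage implies cyclic linkage in general (as recalled in the introduction, following \cite{Chapman:2015}), it suffices to establish the converse under the stated hypotheses: that cyclic linkage forces inseparable linkage whenever $F$ is $p$-special with $H_p^3(F)=0$.

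First I would fix two cyclically linked algebras of degree $p$ over $F$. By the definition of cyclic linkage, they share a cyclic degree $p$ extension of $F$, and hence can be presented as symbols with a common left slot, say $[\alpha,\beta)_{p,F}$ and $[\alpha,\gamma)_{p,F}$ for suitable $\alpha\in F$ and $\beta,\gamma\in F^\times$. The crucial observation is then that the relevant obstruction is the class of $\alpha\, d\log\beta\wedge d\log\gamma$ in $H_p^3(F)$; since this group is assumed to vanish, that class is automatically trivial, with no computation required.

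Next I would invoke the preceding theorem: the triviality of $\alpha\, d\log\beta\wedge d\log\gamma$ in $H_p^3(F)$ produces a finite extension $K/F$ with $p\nmid[K:F]$ over which $[\alpha,\beta)_{p,K}$ and $[\alpha,\gamma)_{p,K}$ are inseparably linked. Here the $p$-special hypothesis enters decisively: as $F$ admits no nontrivial prime to $p$ extensions, the only admissible $K$ is $F$ itself, so the two algebras are already inseparably linked over $F$. Combining this with the known reverse implication yields the claimed coincidence.

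I do not expect a genuine obstacle, since the statement is essentially a specialization of the theorem just proved; the only points deserving care are that rewriting a cyclically linked pair with a common left slot is exactly the content of the definition of cyclic linkage, and that the phrase \emph{$p$-special} is being used precisely to collapse the prime to $p$ extension $K$ back down to $F$.
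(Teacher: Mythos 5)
Your proof is correct and follows exactly the route the paper intends: the corollary is stated there without proof precisely because it is the immediate specialization of the preceding theorem, with $H_p^3(F)=0$ making the class $\alpha\,d\log\beta\wedge d\log\gamma$ trivial and $p$-specialness forcing the prime to $p$ extension $K$ to equal $F$. Your two points of care (writing a cyclically linked pair with a common left slot, and collapsing $K$ to $F$) are exactly the right ones, and the reverse implication via \cite{Chapman:2015} is handled correctly.
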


\begin{rem}
One may wonder if $([\alpha,\beta)_{p,F},[\alpha,\gamma)_{p,F}) \mapsto \alpha d\log \beta d\log \gamma \in H_p^3(F)$ is a well-defined cohomological invariant. Unfortunately, it is not. Since $[\alpha,\beta)_{p,F}=[-\alpha,\beta^{-1})_{p,F}$ and $[\alpha,\gamma)_{p,F}=[-\alpha,\gamma^{-1})_{p,F}$, the associated class in $H_p^3(F)$ is also $-\alpha d\log \beta^{-1} \wedge d\log \gamma^{-1}$, which is $-\alpha d\log \beta d\log \gamma$, which is not equal to $\alpha d\log \beta \wedge d\log \gamma$ when $p\geq 3$. For $p=2$ it is a well-defined cohomological invariant, as described in \cite{ChapmanGilatVishne:2017}.\end{rem}
\section{Linkage and Essential Dimension}

Set $k$ to be an algebraically closed field of $\operatorname{char}(k)=p$.
Note that any field $F$ of transcendence degree $m$ over $k$ is a $C_m$ field (see \cite{Lang:1952}), which means that every homogeneous polynomial equation of degree $d$ in more than $d^m$ variables over $F$ has a nontrivial solution.
By \cite{ArasonBaeza:2010}, $H_p^{m+1}(F)=0$.

\begin{thm}
Given a field extension $F$ of $k$ and cyclically linked algebras $[\alpha,\beta)_{p,F}$ and $[\alpha,\gamma)_{p,F}$, the essential $p$-dimension of the pair is $\leq 2$ if and only if they become inseparably linked under the restriction to some prime to $p$ extension of $F$.
\end{thm}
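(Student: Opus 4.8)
I would prove the two implications separately; the implication from essential $p$-dimension to inseparable linkage is clean and rests on the vanishing of $H_p^3$ over low-dimensional fields, whereas the reverse implication is the substantive one and requires a prime to $p$ descent. Throughout write $A=[\alpha,\beta)_{p,F}$ and $B=[\alpha,\gamma)_{p,F}$.

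Suppose first that $\operatorname{ED}_k((A,B);p)\leq 2$. By definition there is a prime to $p$ extension $L/F$ and a subfield $E\subseteq L$ with $\operatorname{trdeg}_k E\leq 2$ to which $(A,B)\otimes_F L$ descends; since the descent takes place inside the functor of cyclically linked pairs, the descended pair has the form $A_0=[\alpha_0,\beta_0)_{p,E}$, $B_0=[\alpha_0,\gamma_0)_{p,E}$ with $(A_0,B_0)\otimes_E L\cong(A,B)\otimes_F L$. As $E$ has transcendence degree at most $2$ over the algebraically closed field $k$, it is a $C_2$ field, so by the vanishing recalled above ($H_p^{m+1}=0$ in transcendence degree $m$) we have $H_p^3(E)=0$; in particular the class $\alpha_0\,d\log\beta_0\wedge d\log\gamma_0$ is trivial. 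I then apply the inseparable linkage theorem of the previous section \emph{over $E$} to obtain an extension $E'/E$ with $p\nmid[E':E]$ over which $A_0,B_0$ are inseparably linked, and finally pass to a field factor $L'$ of the semisimple ring $E'\otimes_E L$. Inseparable linkage survives scalar extension, $(A,B)\otimes_F L'\cong(A_0,B_0)\otimes_E L'$, and $[L':F]=[L':L]\,[L:F]$ is prime to $p$, so the pair is inseparably linked over the prime to $p$ extension $L'/F$. Running the linkage argument over $E$ rather than over $L$ is what lets me avoid transporting the cohomology class across the descent isomorphism, where its sign ambiguity (see the remark above) would otherwise intervene.

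For the converse, suppose $(A,B)$ becomes inseparably linked over a prime to $p$ extension $L/F$, and use the common purely inseparable subfield to rewrite $A\otimes_F L\cong[\alpha_1,\beta_0)_{p,L}$ and $B\otimes_F L\cong[\alpha_2,\beta_0)_{p,L}$ with a shared right slot $\beta_0$. A priori this presentation exhibits the pair over $k(\alpha_1,\alpha_2,\beta_0)$, of transcendence degree at most $3$, and the whole point is to bring this down to $2$ after a further prime to $p$ extension. My plan is to keep the two left slots fixed and to \emph{move} the shared inseparable slot: I would look among the degree $p$ purely inseparable subfields common to the two algebras for one whose generating $p$-th power $\beta_0'$ is algebraically dependent on $\alpha_1,\alpha_2$ modulo $p$-th powers. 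Finding such a $\beta_0'$ amounts to solving, over a prime to $p$ extension, the condition that a single element be simultaneously a purely inseparable generator in each algebra, i.e.\ that some $v\in A$ and $w\in B$ satisfy $s_1(v)=\dots=s_{p-1}(v)=0$, $s_1(w)=\dots=s_{p-1}(w)=0$ and $s_p(v)=s_p(w)$; this is precisely the kind of homogeneous system of degrees $1,\dots,p-1$ to which \Tref{Pfister} applies, and it is the same device used in the previous section, where a common slot of the form $\norm_{K[i]/K}(f)\gamma$ was produced from a Pfister solution $f$. Once $\beta_0'$ is confined to $k(\alpha_1,\alpha_2)$ modulo $p$-th powers, the rewritten pair is defined over a field of transcendence degree $2$, which yields $\operatorname{ED}_k((A,B);p)\leq 2$.

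The main obstacle is exactly this last step: guaranteeing not merely that a common inseparable slot exists over a prime to $p$ extension, but that it can be chosen \emph{algebraically dependent} on the two left slots, so that the transcendence degree genuinely falls from $3$ to $2$. \Tref{Pfister} produces solutions of the relevant systems but gives no control over the field generated by the slot it returns, so the delicate point is to fold the dependence condition into the system while keeping all degrees prime to $p$ and enough variables to retain solvability. The prime to $p$ freedom is indispensable here: over $F$ itself the parameters $\alpha_1,\alpha_2,\beta_0$ may be algebraically independent, so the same pair can have ordinary essential dimension $3$, and it is only after a prime to $p$ extension that the bound $2$ can be reached.
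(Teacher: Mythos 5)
Your forward implication is sound and follows the paper's route: descend the pair, inside the functor of cyclically linked pairs, to a field $E$ with $\operatorname{trdeg}_k E\leq 2$, use $H_p^3(E)=0$ (Arason--Baeza for $C_2$ fields), and apply the linkage theorem of the previous section. Your compositum step is actually a point of care the paper elides: since that theorem only yields inseparable linkage after a further prime to $p$ extension $E'/E$, one does need to pass to a field factor $L'$ of $E'\otimes_E L$, and your choice works because $E'/E$ is separable (its degree is prime to $p$ in characteristic $p$), so the factor degrees sum to $[E':E]$ and at least one factor has degree prime to $p$ over $L$.

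The converse, however, has a genuine gap, and you have located it yourself: nothing in your argument forces the transcendence degree down from $3$ to $2$. Two concrete failures. First, the system you propose to feed into Theorem~\ref{Pfister} contains the equation $s_p(v)=s_p(w)$, which is homogeneous of degree $p$ in the joint variables; Theorem~\ref{Pfister} requires all degrees to be prime to $p$, so it does not apply, and there is no evident way to encode ``equal norm values'' by forms of degree prime to $p$. Second, even granting a nontrivial solution over some prime to $p$ extension, Pfister-type theorems give no control over the field generated by the solution, so the required algebraic dependence of the common slot on $k(\alpha_1,\alpha_2)$ cannot be extracted --- as you concede. The paper's proof of this direction rests on an ingredient of a different nature, namely \cite[Theorem 4.7]{ChapmanFlorenceMcKinnie:2023}: two inseparably linked algebras of degree $p$ admit \emph{simultaneous} presentations $[a,b)_{p,K}$ and $[a,b-1)_{p,K}$, where the second right slot is $b-1$, a function of $b$ rather than a third parameter. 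The pair therefore descends to $k(a,b)$, of transcendence degree at most $2$, and the converse follows at once, with no Pfister argument and no further extension beyond the given $K$. Without this (or an equivalent two-parameter simultaneous presentation result), your proposal does not prove the converse.
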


\begin{proof}
If the essential $p$-dimension of $([\alpha,\beta)_{p,F},[\alpha,\gamma)_{p,F})$ is $\leq 2$, there exists a prime to $p$ extension $K/F$ such that $[\alpha,\beta)_{p,K}$ and $[\alpha,\gamma)_{p,K}$ descend to $[a,b)_{p,E}$ and $[a,c)_{p,E}$ for some $k \subseteq E \subseteq K$ where $E/k$ is of transcendence degree at most 2.
Then, $a d\log b \wedge d\log c$ is trivial in $H_p^3(E)$, because the latter is a trivial group, and thus $[a,b)_{p,E}$ and $[a,c)_{p,E}$ are inseparably linked, and therefore, their restrictions to $K$ are inseparably linked.

In the opposite direction, if $[\alpha,\beta)_{p,K}$ and $[\alpha,\gamma)_{p,K}$ are inseparably linked for some prime to $p$ extension $K/F$, then there exist $a,b \in K$ such that $[\alpha,\beta)_{p,K}=[a,b)_{p,K}$ and $[\alpha,\gamma)_{p,K}=[a,b-1)_{p,K}$ by \cite[Theorem 4.7]{ChapmanFlorenceMcKinnie:2023}.
The pair thus descends to the field $E=k(a,b)$, which is of transcendence degree at most 2.
\end{proof}

\begin{rem}Note that in the theorem above, the moment one of the algebras is a division algebra, the essential $p$-dimension must be at least 2, because fields of transcendence degree 1 have trivial Brauer groups. In fact, essential $p$-dimension 1 for such pairs is impossible, which leaves three options for each pair: essential $p$-dimension 3 when the algebras do not become inseparably linked under any prime to $p$ extension, essential $p$-dimension 2 when they do but at least one of the algebras is non-split, and 0 when the two algebras are split.
\end{rem}

\begin{lem}
Given a field $F$ of $\operatorname{char}(F)=p$ and $\alpha,\beta \in F$ such that $[\alpha,\beta)_{p,F}$ is a division algebra, the algebras $[\alpha,\beta)_{p,F(x)}$ and $[\alpha,x)_{p,F(x)}$ do not become inseparably linked under any restriction to a prime to $p$ extension of $F(x)$.
\end{lem}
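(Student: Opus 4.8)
The plan is to argue by contradiction using the ramification of the Kato--Milne classes at a place lying over $x$: the class of $[\alpha,x)_{p,F(x)}$ is ramified at $x=0$, whereas $[\alpha,\beta)_{p,F(x)}$ is everywhere unramified, and I would show that no single purely inseparable extension of degree $p$ can reconcile these two facts. So suppose $[\alpha,\beta)_{p,K}$ and $[\alpha,x)_{p,K}$ are inseparably linked for some $K/F(x)$ with $p\nmid[K:F(x)]$. Both algebras have degree $p$, and both are non-split (this will follow from the residues computed below), so a shared purely inseparable subfield is maximal in each and splits both; hence there is a purely inseparable extension $M=K(d^{1/p})$ of degree $p$ splitting $[\alpha,\beta)_{p,K}$ and $[\alpha,x)_{p,K}$ simultaneously.

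First I would choose the place. Since $p\nmid[K:F(x)]$ the extension $K/F(x)$ is separable, and the fundamental identity $\sum_{w\mid(x)}e_wf_w=[K:F(x)]$ forces some extension $w$ of the $x$-adic valuation to have both $e:=e(w/(x))$ and $f:=[\kappa:F]$ prime to $p$, where $\kappa$ denotes the residue field. For this $w$ the class $[\alpha,\beta)_{p,K}$ is unramified, with well-defined specialization $[\alpha,\beta)_{p,\kappa}$, and this is still division because a degree $p$ division algebra remains division under the prime to $p$ extension $\kappa/F$; in particular $\alpha\notin\wp(\kappa)$. The class $[\alpha,x)_{p,K}$ is ramified at $w$ with Kato residue $\partial_w\colon H_p^2(K)\to H_p^1(\kappa)=\kappa/\wp(\kappa)$ equal to $e\bar\alpha\neq0$, since $w(x)=e$ is prime to $p$ and $\bar\alpha\neq0$.

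The crux is a short analysis of the unique extension $w'$ of $w$ to $M$, for which $e(w'/w)f(w'/w)=p$ with purely inseparable residue extension $\kappa'/\kappa$. If $w'/w$ is ramified then $\kappa'=\kappa$, so $[\alpha,\beta)_{p,M}$ is still unramified at $w'$ with nonzero specialization $[\alpha,\beta)_{p,\kappa}$, hence non-split, contradicting that $M$ splits it. If instead $[\kappa':\kappa]=p$ with $\kappa'=\kappa(\bar d^{1/p})$, then $[\alpha,x)_{p,M}$ retains the residue $e\bar\alpha$, which I would show stays nonzero in $H_p^1(\kappa')$; thus $[\alpha,x)_{p,M}$ is ramified, hence non-split, again a contradiction. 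The degenerate case $e(w'/w)=f(w'/w)=1$ is identical, since then $[\alpha,x)_{p,M}$ localizes to the ramified $[\alpha,x)$ over the completion of $K$ at $w$. As every case is impossible, no such $M$ exists and the two algebras are not inseparably linked over any prime to $p$ extension.

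I expect the main obstacle to be exactly the second case: proving that a purely inseparable extension cannot absorb the Artin--Schreier ramification of $[\alpha,x)$, that is, that $\bar\alpha$ survives in $\kappa'/\wp(\kappa')$. I would settle this by the standard fact that the restriction $H_p^1(\kappa)\to H_p^1(\kappa')$ is injective along a purely inseparable extension: expanding a hypothetical $y\in\kappa'$ with $\wp(y)=\alpha$ in the basis $1,\bar d^{1/p},\dots,\bar d^{(p-1)/p}$ and comparing components forces $y\in\kappa$, whence $\alpha\in\wp(\kappa)$, contradicting that $[\alpha,\beta)_{p,\kappa}$ is division. The remaining ingredients---the fundamental identity, the residue formula $\partial_w([\alpha,x))=e\bar\alpha$, and the stability of a degree $p$ division algebra under prime to $p$ extensions---are routine.
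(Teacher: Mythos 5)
Your proof is correct, but it takes a genuinely different route from the paper's. The paper never introduces a hypothetical common subfield: it extends the $x$-adic valuation to the algebras themselves over an arbitrary prime-to-$p$ extension $L$, using \cite[Proposition 4.21]{TignolWadsworth:2015} to pick an extension of the valuation with $p\nmid[\Gamma_L:\Gamma_K]$ and Morandi's theorem to conclude that $[\alpha,x)_{p,L}$ remains division with value group $\tfrac{1}{p}\Gamma_L$ while $[\alpha,\beta)_{p,L}$ remains unramified; comparing value groups then forces every common maximal subfield to be defectless with value group $\Gamma_L$, hence to have residue field $\overline{L}[\wp^{-1}(\alpha)]$, which is separable, so no common subfield can be purely inseparable. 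You instead argue by contradiction through a common purely inseparable splitting field $M$ and stay entirely inside commutative valuation theory: your use of the fundamental identity (legitimate, since $p\nmid[K:F(x)]$ forces separability) replaces the paper's appeal to Tignol--Wadsworth and yields a place with both $e$ and $f$ prime to $p$, the condition on $f$ being exactly what keeps $[\alpha,\beta)_{p,\kappa}$ division; then each of the three behaviours of the unique extension of the place to $M$ is killed by one of the two classes, the decisive new ingredient being the injectivity of $\kappa/\wp(\kappa)\to\kappa'/\wp(\kappa')$ along purely inseparable extensions (your $y=y^p-\alpha$ observation). What you gain is a proof with no noncommutative valuation theory at all (no gauges, no Morandi); what the paper gains is uniformity --- its defectlessness bookkeeping dispenses with your case analysis --- and a slightly stronger conclusion, namely a description of the residue field of every common maximal subfield, not only the purely inseparable ones. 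Three small points to tighten: (i) the equality $e(w'/w)f(w'/w)=p$ is false as stated, since the product can be $1$ with defect $p$, which is your own third case; (ii) that defect case rests on the standard but nontrivial fact that a complete discretely valued field has no proper immediate extensions, which is what lets you embed $M$ into the completion of $K$ at $w$, and this deserves a citation; (iii) since in characteristic $p$ a residue map is not defined on all of $H_p^2$ (wildly ramified classes exist), it is cleanest to phrase every nonsplitting claim over the completion $\hat{M}_{w'}$, where it follows from the elementary norm-value argument --- norms from the unramified extension $\hat{M}_{w'}[\wp^{-1}(\alpha)]$ have values in $p\Gamma$, while $w'(x)=e\notin p\Gamma$ --- which is the same computation the paper itself uses to see that $[\alpha,x)$ is division in the first place.
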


\begin{proof}
Consider the $x$-adic valuation on $K=F(x)$.
It is easy to see that the algebra $[\alpha,x)_{p,FK}$ is division, because the value of $x$ is 1 whereas the norms from $K[\wp^{-1}(\alpha)]/K$ are of values divisible by $p$.
This valuation extends naturally to this algebra, by the restriction to $F(\!(x)\!)$ over which it becomes henselian.
The algebra $[\alpha,\beta)_{p,K}$ is unramified.
Both algebras are defectless, and thus every common subfield of theirs is defectless.

Let $L/K$ be a prime to $p$ extension.
By \cite[Proposition 4.21]{TignolWadsworth:2015}, there exists an extension of the valuation to $L$ such that $p \nmid [\Gamma_L:\Gamma_{K}]$.
Therefore, by Morandi's theorem (see \cite[Theorem 3.43]{TignolWadsworth:2015}), $[\alpha,x)_{L,p}$ is a division algebra to which the valuation extends and its value group is $\Gamma_L+\frac{1}{p}\mathbb{Z}=\frac{1}{p}\Gamma_L$.
The valuation also extends to the algebra $[\alpha,\beta)_{p,L}$ with value group $\Gamma_L$. Both algebras are still defectless with respect to the valuation on $L$.
Since the intersection of the value groups is $\Gamma_L$, the algebras only share maximal subfields whose value groups are $\Gamma_L$, and because each such subfield is defectless, its residue algebra is a degree $p$ extension of the residue $\overline{L}$ of $L$. In particular, if this common subfield is a purely inseparable extension of $L$, its residue algebra is still a degree $p$ purely inseparable extension of $\overline{L}$. The residue algebra of $[\alpha,x)_{p,L}$ is $\overline{L}[\wp^{-1}(\alpha)]$, and so every common subfield has $\overline{L}[\wp^{-1}(\alpha)]$ as a residue algebra, and in particular, cannot be a purely inseparable extension of $L$. Therefore, $[\alpha,x)_{p,L}$ and $[\alpha,\beta)_{p,L}$ are not inseparably linked.
\end{proof}

\begin{cor}
The essential dimension and essential $p$-dimension of $\operatorname{CLA}$ with respect to $k$ are equal to 3.
\end{cor}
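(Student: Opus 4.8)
The plan is to trap both invariants between the same pair of bounds. Because the base field $F$ is itself a prime to $p$ extension of $F$, one has $\operatorname{ED}_k(A;p)\leq\operatorname{ED}_k(A)$ for every pair $A\in\operatorname{CLA}(F)$, and taking suprema gives $\operatorname{ED}_k(\operatorname{CLA};p)\leq\operatorname{ED}_k(\operatorname{CLA})$. It therefore suffices to prove the two inequalities $\operatorname{ED}_k(\operatorname{CLA})\leq 3$ and $\operatorname{ED}_k(\operatorname{CLA};p)\geq 3$; together with the displayed inequality these force both quantities to equal $3$.

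For the upper bound I would show that every pair descends to a field of transcendence degree at most $3$. By definition a cyclically linked pair over $F$ can be written as $([\alpha,\beta)_{p,F},[\alpha,\gamma)_{p,F})$, sharing the slot $\alpha$ and the generator $i$ with $i^p-i=\alpha$, and is thus encoded by the three scalars $\alpha,\beta,\gamma\in F$. Setting $E=k(\alpha,\beta,\gamma)$, the pair $([\alpha,\beta)_{p,E},[\alpha,\gamma)_{p,E})$ lies in $\operatorname{CLA}(E)$, since the two algebras share $E[\wp^{-1}(\alpha)]$, and base changes to the original pair over $F$. As $\operatorname{trdeg}_k E\leq 3$, every pair has essential dimension at most $3$, so $\operatorname{ED}_k(\operatorname{CLA})\leq 3$.

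For the lower bound I would exhibit a single pair of essential $p$-dimension at least $3$. The preceding Lemma needs a field $F\supseteq k$ carrying a division algebra $[\alpha,\beta)_{p,F}$, and such an $F$ must have transcendence degree at least $2$: a transcendence degree $1$ extension of the algebraically closed field $k$ is a $C_1$ field with trivial Brauer group, so no symbol over it is division. I would therefore take $\alpha,\beta$ algebraically independent over $k$ and $F=k(\alpha,\beta)$, and verify that $[\alpha,\beta)_{p,F}$ is division by the valuation-theoretic argument already used in the Lemma for $[\alpha,x)_{p,F(x)}$: the $\beta$-adic valuation has residue field $k(\alpha)$, over which $k(\alpha)[\wp^{-1}(\alpha)]$ is a separable degree $p$ extension, while $\beta$ has value $1$, so that the residue and value-group indices multiply to the full dimension $p^2$ and force the algebra to be division. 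Passing to $F(x)=k(\alpha,\beta,x)$, the Lemma shows that $[\alpha,\beta)_{p,F(x)}$ and $[\alpha,x)_{p,F(x)}$ do not become inseparably linked under any prime to $p$ extension of $F(x)$; by the theorem of this section characterizing essential $p$-dimension $\leq 2$, this pair has essential $p$-dimension at least $3$, whence $\operatorname{ED}_k(\operatorname{CLA};p)\geq 3$.

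Assembling the three inequalities yields $3\leq\operatorname{ED}_k(\operatorname{CLA};p)\leq\operatorname{ED}_k(\operatorname{CLA})\leq 3$, so both equal $3$. The genuine content sits in the lower bound, but its difficulty has already been paid for by the Lemma and the essential-dimension theorem; the only step I must check by hand is the existence of a division symbol over $k(\alpha,\beta)$ to serve as the Lemma's input, which I expect to be the main (though minor) obstacle.
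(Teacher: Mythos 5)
Your proof is correct and follows essentially the same route as the paper: the upper bound via descent of any pair to $k(\alpha,\beta,\gamma)$, and the lower bound by applying the Lemma over $F(x)$ with $F=k(\alpha,\beta)$ and then invoking the Theorem characterizing essential $p$-dimension $\leq 2$ through inseparable linkage. The only difference is that you explicitly verify the Lemma's hypothesis that $[\alpha,\beta)_{p,k(\alpha,\beta)}$ is a division algebra (via the $\beta$-adic valuation and the norm-value argument), a point the paper's proof leaves implicit.
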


\begin{proof}
They cannot be greater than 3 because the pair $([\alpha,\beta)_{p,F},[\alpha,\gamma)_{p,F})$ always descends to $k(\alpha,\beta,\gamma)$.
It remains to prove that there exists a pair of algebras that remain not inseparably linked under any prime to $p$ extension, and thus their essential $p$-dimension is not $\leq 2$, which leaves the option of 3 alone.
By the previous lemma, $[\alpha,\beta)_{p,K}$ and $[\alpha,\gamma)_{p,K}$ remain division algebras under restriction to any prime to $p$ extension $L/K$ where $K=F(\gamma)$ and $F=k(\alpha,\beta)$. Therefore, $\operatorname{ED}_k(([\alpha,\beta)_{p,K},[\alpha,\gamma)_{p,K});p)=3$. Hence, $\operatorname{ED}_k(\operatorname{CLA};p)=3$.
Since the essential dimension here is at most three and no less than the essential $p$-dimension, we have $\operatorname{ED}_k(\operatorname{CLA})=3$ as well.
\end{proof}

For $p=2$, the last corollary provides a characteristic 2 analogue for \cite[Theorem 1.3 (c)]{CerneleReichstein:2015}.
In the latter, the authors were studying the essential dimension of triples of quaternion $F$-algebras $Q_1,Q_2,Q_3$ where $Q_1 \otimes Q_2 \otimes Q_3$ is split for $\operatorname{char}(F)\neq 2$. This is the same as the essential dimension of pairs of linked quaternion algebras, because $Q_1 \otimes Q_2 \otimes Q_3$ is split if and only if $Q_1$ and $Q_2$ are linked and $Q_3$ is the unique quaternion algebra Brauer equivalent to $Q_1 \otimes Q_2$.
\section*{Acknowledgements}
The author is grateful to the two referees for the helpful comments and suggestions.
\bibliographystyle{abbrv}
\bibliography{bibfile}
\end{document}